\newtheorem{thm}{Theorem}
\newtheorem{corollary}[thm]{Corollary}
\newtheorem{theorem}{Theorem}
\newtheorem{lemma}[theorem]{Lemma}
\newtheorem{proposition}[theorem]{Proposition}
\newcommand{\Pb}{\text{Pb}}
\newcommand{\Sc}{\mathcal{S}}
\newcommand{\Ec}{\mathcal{E}}
\newcommand{\M}{{\mathbb{M}}}
\newcommand{\RNum}[1]{\uppercase\expandafter{\romannumeral #1\relax}}
\newcommand\tabcaption{\def\@captype{table}\caption}
\definecolor{DSgray}{cmyk}{0,1,0,0}
\newcommand{\E}{\mathbb{E}}
\newcommand{\PP}{\mathbf{P}} 
\newcommand{\Th}{\text{Th}}
\newcommand{\Ab}{\text{Ab}}
\newcommand{\Serv}{\text{Serv}}
\begin{document}





\title{On the SRPT Scheduling Discipline in Many-Server Queues with Impatient Customers}

\author{Jing Dong\footnote{Columbia University, Email: jing.dong@gsb.columbia.edu} \footnote{ Support from NSF grant CMMI-1944209 is gratefully acknowledged by J. Dong} ~ and Rouba Ibrahim\footnote{University College London, Email: rouba.ibrahim@ucl.ac.uk}}
\date{}

\maketitle

\begin{abstract}
The shortest-remaining-processing-time (SRPT) scheduling policy has been extensively studied, for more than 50 years, in single-server queues with infinitely patient jobs. Yet, much less is known about its performance in multiserver queues. In this paper, we present the first theoretical analysis of SRPT in multiserver queues with abandonment. In particular, we consider the $M/GI/s+GI$ queue and demonstrate that, in the many-sever overloaded regime, performance in the SRPT queue is equivalent, asymptotically in steady state, to a preemptive two-class priority queue where customers with short service times (below a threshold) are served without wait, and customers with long service times (above a threshold) eventually abandon without service. We prove that the SRPT discipline maximizes, asymptotically, the system throughput, among all scheduling disciplines. We also compare the performance of the SRPT policy to blind policies and study the effects of the patience-time and service-time distributions.
\end{abstract}




%


\section{Introduction}

In this paper, we study scheduling decisions in multiserver queues with processing-time information. In particular, we assume that the service requirements of individual jobs (customers) are known upon arrival and may be used to determine the order in which these jobs are processed. When processing times are known, it seems natural to give priority to the jobs with the shorter remaining processing times in order to minimize the mean sojourn time, i.e., the time from arrival until departure from the system. Indeed, a large body of literature shows that the \textbf{s}hortest-\textbf{r}emaining-\textbf{p}rocessing-\textbf{t}ime (SRPT) policy has, in general, superior performance; see, for example, \cite{schrage1966queue}, \cite{lin2011heavy}, and \cite{HTSRPT2020}, to name a few.

The SRPT policy has been extensively studied for over 50 years, yet exclusively in single-server queues with infinitely patient jobs. There is only one exception: The recent paper \cite{grosof2018srpt} studies the performance of the SRPT policy in a multiserver queueing system with Poisson arrivals, general service times, and no abandonment. Grosof et. al. \cite{grosof2018srpt} prove that the SRPT policy achieves an asymptotically optimal mean sojourn time in the conventional heavy traffic regime, i.e., where the number of servers in a sequence of multiserver queues is held fixed, while the arrival rates along that sequence approach the total service capacity. 

In settings where jobs are human customers to be scheduled, e.g., in service systems, it is important to account for finite customer patience times. That is, customers do not wait indefinitely for service, and they abandon the queue if their waiting time exceeds their patience time. For example, there is empirical evidence substantiating finite customer patience in emergency departments \cite{batt2015waiting} and call centers \cite{brown2005statistical}. Moreover, it is well known that incorporating customer impatience strongly affects performance in the system \cite{garnett2002designing}. Thus, there is a need to investigate whether the superior performance of SRPT continues to hold in multiserver queues where patience times are finite. This investigation is the focus of our paper.

\noindent \textbf{What is this paper about?} To the best of our knowledge, the performance of SRPT in multiserver queues with abandonment is entirely open. In this paper, we take a step towards filling that gap in the literature, by studying {\it the steady-state performance of SRPT in the $M/GI/s+GI$ queue.} We adopt a many-server asymptotic mode of analysis and focus on the overloaded regime, which is also known as the efficiency-driven regime. This regime is appropriate because queueing times are negligible, in large systems with abandonment, under moderate or light load (i.e., critically loaded or underloaded regimes) \cite{garnett2002designing}. In the many-server overloaded regime, a non-negligible proportion of customers abandon the queue. Thus, carefully designing the scheduling policy to optimize the throughput is crucial in this setting.  

For multiserver queues with abandonment, under SRPT scheduling, we demonstrate a state-space collapse in the many-server overloaded limit. In particular, we prove that only customers with long service times (above a threshold) wait in the queue, and eventually abandon, whereas customers with short service times are immediately served.
We also derive closed-form expressions for key steady-state performance measures in the limit. We prove that, asymptotically, among all scheduling policies, SRPT maximizes the throughput in the system, minimizes the expected waiting time conditional on being served, and maximizes the expected waiting time conditional on abandoning. 
We focus on such measures, rather than the mean sojourn time as is common in the extant literature, because not all customers are served in queues with abandonment. We also show that performance in the SRPT queue is, asymptotically, insensitive to the patience-time distribution beyond its mean, which is unlike  performance under first-come-first-served (FCFS). 
%

We compare SRPT to blind policies that do not use the processing-time information, such as FCFS and last-come-first-served (LCFS). In addition to throughput, we also compare the expected waiting times. Based on fluid approximations, we show that, when the patience-time distribution has a non-decreasing failure rate, SRPT yields a smaller expected waiting time than any blind policy. On the other hand, when the patience-time distribution has a decreasing failure rate, SRPT yields a smaller expected waiting time than LCFS, but it may lead to longer expected waiting time than FCFS. Either way, even when SRPT beats blind policies, it does not offer any order of magnitude improvement in waiting times over those policies. This lies in contrast to the asymptotic system performance in the conventional heavy-traffic regime, as the traffic intensity increases \cite{lin2011heavy, puha2015diffusion, chen2020power}.

\noindent \textbf{Why is this problem difficult?} In general, analyzing SRPT is complicated because it requires keeping track of the remaining processing time of each customer in the system. Even in single-server queues, where closed-form expressions have been known for a while, comparing SRPT to other scheduling disciplines is difficult because those closed-form expressions, e.g., for the mean sojourn time, have complex forms and involve nested integrals. Asymptotic analysis, e.g., under heavy traffic, generally allows for simpler descriptions of the system. However, the asymptotic analysis of SRPT involves studying suitably scaled measure-valued system state descriptors, which imposes substantial technical challenges \cite{HTSRPT2020}.

Even without abandonment, when moving from a single server to multiple servers, there is a main challenge in extending the existing single-server arguments. The difficulty arises from the fact that multiserver queues are not work-conserving. Specifically, this makes the analysis of busy periods and steady-state workload, both of which are central to the ``tagged job approach'' of analysing SRPT single-server queues \cite{schrage1966queue}, difficult to extend to a multiserver setting; see section 4.2 in \cite{grosof2018srpt}.

In this paper, we allow for multiple servers, general service times, and general patience times, which complicates the analysis even more. Scheduling decisions in systems with abandonment is notoriously difficult, because the optimal scheduling policy can be complex and depends on the patience-time distribution \cite{puha2019scheduling}. For example, when the system is critically loaded, the optimal diffusion control may no longer follow a simple fixed priority rule \cite{kim2013dynamic, kim2018dynamic}. In this paper, we circumvent the difficulty of doing direct analysis on the SRPT queue by relying on a {properly coupled loss queueing system}; see section \ref{asymp} for details.

\noindent \textbf{Literature review. } 
 Because of its optimality properties, the study of SRPT in single-server queues has been the topic of hundreds of papers. Given the richness of that literature, we do not attempt to be comprehensive in our review, and only mention a few key references instead.

The works \cite{schrage1966queue} and \cite{schrage1968letter}  demonstrate optimality properties of SRPT in the $M/G/1$ system. There is a notable stream of works that studies SRPT under heavy-traffic \cite{down2009fluid, gromoll2011diffusion, puha2015diffusion}. 
Scully et.\ al.\ \cite{scully2018soap} develop a unified framework to analyze several age-based scheduling policies in the $M/G/1$ queue.

Recently, Chen and Dong \cite{chen2020power} demonstrate that in the $GI/GI/1$ queue, under heavy traffic, a preemptive two-class priority rule achieves asymptotically comparable performance to the SRPT policy. In the two-class priority rule, customers whose service times are shorter than a certain threshold are given preemptive priority over customers whose service times are above that threshold. 
They establish state-space collapse, under which only the low-priority customers (with long service times) occupy the queue in the heavy-traffic limit. 
Similar results are common in scheduling multi-class priority queues; e.g., see \cite{reiman1984some}, \cite{bramson1998state}, and \cite{dai2011state}.


In stark contrast to the single-server setting, much less is known about the performance of SRPT in a multiserver queueing model. With multiple servers, SRPT is not necessarily optimal \cite{leonardi2007approximating}. However, Grosof et. al. \cite{grosof2018srpt} recently show that it is asymptotically optimal under heavy load. In \cite{grosof2018srpt}, jobs are assumed to be infinitely patient. In this paper, we consider finite patience times. 
To the best of our knowledge, we are the first to derive theoretical results on the performance of SRPT in multiserver queues with abandonment.


\noindent \textbf{Paper organization.} 
The rest of this paper is organized as follows. In section \ref{model}, we describe our modeling framework. In section \ref{asymp}, we derive our main result on the asymptotic equivalence of the SRPT queue with a preemptive two-class priority system. In section \ref{perf}, we compare the performance of SRPT to blind policies and study the roles of the service-time and patience-time distributions. In section \ref{conc}, we draw conclusions. We relegate some technical proofs to the appendix. 


\section{Modeling Setup: The SRPT $M/GI/s+GI$ Queue} \label{model}

In this section, we set the stage for our subsequent theoretical development by describing our modeling framework and defining our many-server asymptotic mode of analysis.

\subsection{Model Description}

We consider the $M/GI/s+GI$ queue in steady state, i.e., we assume that the arrival process is Poisson with rate $\lambda$, service times are independent and identically distributed (i.i.d.) with a general cumulative distribution function (cdf) $G$ and mean 1/$\mu$, and times to abandon are i.i.d. with cdf $F$ and mean $1/\theta$. Let $S$ denote a generic service time and $T$ a generic patience time. In addition, let $S_i$ and $T_i$ denote the service time and patience time of the $i$-th arriving customer. We assume that the service-time distribution and the patience-time distribution are continuous with probability density functions $g$ and $f$, respectively. There are $s$ homogeneous servers working in parallel. 


We consider the SRPT queueing discipline. Specifically, a customer who arrives to find an empty server goes to service immediately upon arrival. If all servers are busy at the arrival epoch, and there exists at least one customer in service whose remaining processing time is longer than the new arriving customer's, then the customer in service with the longest remaining processing time is preempted and joins the queue. Otherwise, the new arrival joins the queue directly.
Customers have finite patience times, generated at the arrival epoch of the customer. If the cumulative amount of the time that the customer spends in the queue exceeds her patience time, then the customer abandons the system. In particular, if a customer enters service and is later preempted back to queue, then we assume that her initial patience time (which had not fully elapsed since she did not abandon previously) continues to elapse, i.e., we do not generate a new patience time for the preempted customer at every preemption epoch. We assume that the arrival, service, and abandonment processes are mutually independent. We define the traffic intensity $\rho \equiv \lambda/s\mu$.

Because abandonment is allowed in the system, it is not necessary to assume $\rho < 1$ for the system to reach a steady state. To elaborate, with general service-time or patience-time distributions, there is no finite-dimensional Markovian representation of the queue \cite{dai2013many}. Indeed, a Markovian description of the state of the system would require keeping track of the remaining or elapsed patience times and the remaining or elapsed service times of each customer present in the system (in service or in queue). We now present an infinite-dimensional state representation which leads to a Markovian description of the dynamics of the system (with respect to a suitable filtration). For $t\geq 0$, let $X(t)\in \mathbb{N}_0:=\{0,1,\dots\}$ denote the number of customers in the system
and $R(t)\in \mathbb{R}^{3\times\infty}$ denote the remaining service times, remaining patience times, and initial service times of customers in the system. Let $R_{j}(t)$ be the $j$-th column of $R(t)$. When $X(t)>0$, for $j\in \{1, \dots, X(t)\}$, $R_j(t)$ is a column vector whose first element is the remaining service time, the second element is the remaining patience time, and the third element is the initial service time of the $j$-th earliest arriving customer, among all customers currently in the system. For $j>X(t)$, $R_j(t)$ is a column vector with zero entries. Then, the process $R(t)$ is a Markov process which describes system dynamics. Note that, in order to describe how the system evolves, we only need to know the first two elements in $R_j(t)$, i.e., the remaining service time and the remaining patience time of each customer. We add a third element, the initial service time, to facilitate the development of the state-space collapse result. The invariant measure of the Markov process $R(t)$ is referred to as the steady-state distribution of the system. 

\subsection{Many-Server Overloaded Regime} \label{scaling}
To derive theoretical insights on the performance of SRPT, we consider a sequence of $M/GI/s_\lambda+GI$ queues, indexed by the arrival rate $\lambda$. We fix the traffic intensity in system $\lambda$ to $\rho_\lambda = \lambda/(s_\lambda\mu) \equiv \rho > 1$, i.e., we consider an overloaded setting. We hold the service-time and patience-time distributions fixed, independently of $\lambda$, and let $\lambda$ and $s_\lambda$ increase without bound. 

Define the threshold, $\tau$, satisfying 
\begin{equation} \label{eq:t} \lambda \cdot \mathbb{P}(S \leq \tau) \cdot \mathbb{E}[S|S\leq \tau] = \lambda \mathbb{E}[S \mathbf{1}(S \leq \tau)] = s_\lambda, \end{equation} 
where $\mathbf{1}(\cdot)$ denotes the indicator function. That is, we choose $\tau$ such that
the total workload of customers with service times smaller than or equal to $\tau$ matches the service capacity of the system. 
In the following section, we prove state-space collapse, i.e., that the SRPT queue is asymptotically equivalent to a two-class priority queue.
The high-priority class, defined as jobs whose service times are smaller than or equal to $\tau$, has preemptive priority over the low-priority class, defined as jobs whose service times are larger than $\tau$. We emphasize that $\tau$, as defined in \eqref{eq:t}, does not depend on $\lambda$ since $s_\lambda/\lambda = 1/(\rho \mu)$ is held fixed under our scaling.

\section{Asymptotic Analysis} \label{asymp}

Direct analysis on the SRPT $M/GI/s_\lambda+GI$ queue is complicated, which partly explains why it has eluded theoretical analysis for decades. Here, we overcome the technical challenges by proposing a coupling argument. Specifically, we derive asymptotic results quantifying performance in the SRPT queue by constructing a coupled two-class preemptive priority $M/GI/s_\lambda/s_\lambda$ (loss) queue, with the same arrival process and service-time distribution. Under the coupling, both systems see the same arriving customers, i.e., with the same arrival times and service requirements. However, the service disciplines in the two systems are different. In contrast to multiserver queues with abandonment under SRPT, much more is known about loss queues. 

\subsection{A Sequence of Coupled Loss Systems}

The coupled $M/GI/s_\lambda/s_\lambda$ loss queue operates under the following preemptive two-class priority rule. Recall the threshold $\tau$ defined in \eqref{eq:t}. In the loss queue, customers whose service times are less than or equal to $\tau$ are grouped into the high-priority class, which we refer to as class 1.
The remaining customers are grouped into the low-priority class, which we refer to as class 2. There are $s_\lambda$ servers, and no waiting room. 

Upon arrival, a high-priority, class 1, customer enters service immediately if there is an empty server, or if there is at least one low-priority, class 2, customer in service (the class 2 customer with the longest remaining processing time will be preempted). Otherwise, i.e., if all servers are busy with class 1 customers, the newly arriving customer is {lost}. A class 2 customer enters service only if there is an idle server upon arrival and, if all servers are busy, the class 2 customer is lost. A preempted class 2 customer is also lost as there is no waiting room in the system. Because of preemption, the high-priority customers do not ``see'' the low-priority customers: For class 1 customers, the system behaves as a single-class $M/GI/s_\lambda/s_\lambda$ queue with arrival rate $\lambda G(\tau)$, resulting from thinned Poisson arrivals, and service time distribution $[S|S\leq \tau]$.

In what follows, we refer to the $M/GI/s_\lambda+GI$ SRPT queue as system $O$, where $O$ stands for {\bf o}riginal, and the two-class priority $M/GI/s_\lambda/s_\lambda$ loss queue as system $L$, where $L$ stands for {\bf l}oss. By a slight abuse of notation, in both systems, we refer to customers whose service times are less than or equal to $\tau$ as class 1 customers, and customers whose service times are longer than $\tau$ as class 2 customers.

For $\M\in\{L,O\}$, $i=1,2$, and $t\geq 0$, we let $N_{\M,i}(t)$ denote the number of class $i$ customers served (have successfully finished service) in system $\M$ by time $t$. 
We also define
\begin{equation} \label{eq:th}
\Th_{\M,i}=\lim_{t\rightarrow\infty}\frac{N_{\M,i}(t)}{t}.
\end{equation}
$\Th_{\M,i}$ is the long-run departure rate (throughput) of class $i$ customers in system $\M$. Let $\Th_{O}=\Th_{O,1}+\Th_{O,2}$ to be the total throughput in system $O$.

The following proposition builds the foundation of our coupling argument and is the key to deriving limiting performance measures in section \ref{Asymperf}.

\begin{proposition} \label{prop:main}
For the coupled SRPT and loss queues, and the throughput defined as in \eqref{eq:th}:
\begin{equation} \label{boundTh} \Th_{L,1} \leq \Th_{O}.\end{equation}
\end{proposition}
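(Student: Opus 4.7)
The plan is to couple $O$ and $L$ on a common probability space and then reduce the claim to a comparison of their class~1 sub-systems. I would couple the two queues so that they share the same arrival epochs, the same service times $\{S_i\}$ (and hence the same partition into class~1 and class~2 via the threshold $\tau$ from \eqref{eq:t}), with independent i.i.d.\ patience times $\{T_i\}\sim F$ attached only in $O$.

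The first step is to isolate the class~1 dynamics in each system. In $L$ this is built into the priority rule. In $O$ it follows from SRPT together with the definition of $\tau$: any class~2 customer currently in service has remaining processing time strictly larger than $\tau$, while every class~1 customer's remaining processing time is at most $\tau$, so SRPT never preempts a class~1 in favor of a class~2; moreover, whenever a class~1 customer arrives to a full system containing at least one class~2 in service, that class~2 (with the longest remaining time) is immediately preempted. Consequently the class~1 customers in $O$ form an autonomous $M/GI/s_\lambda+GI$ sub-system with arrival rate $\lambda G(\tau)$, service distribution $[S\mid S\le \tau]$, and patience $T$, while the class~1 customers in $L$ form an autonomous $M/GI/s_\lambda/s_\lambda$ loss sub-system with the same arrival rate and service distribution. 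Since $\Th_O = \Th_{O,1} + \Th_{O,2} \ge \Th_{O,1}$, it suffices to prove the reduced inequality $\Th_{O,1} \ge \Th_{L,1}$.

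The remaining task is a pathwise comparison of these two isolated class~1 sub-systems under the coupling, aiming to show $N_{O,1}(t) \ge N_{L,1}(t)$ almost surely for every $t \ge 0$, so that dividing by $t$ and sending $t\to\infty$ delivers the throughput inequality. Intuitively, a class~1 customer blocked in $L$ (because all $s_\lambda$ servers are already occupied by other class~1 customers) instead joins the queue in $O$ and has a positive probability of being served; customers accepted in $L$ should admit matching completions in $O$. The main obstacle is that service completion epochs are asynchronous across the two systems: in $L$ each accepted class~1 customer is served contiguously over $[t_i, t_i + S_i]$, but under SRPT in $O$ the same customer can be preempted repeatedly, and a preempted customer may even abandon before it resumes. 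A naive event-by-event induction on the joint process therefore fails. I would circumvent this by working with the cumulative work devoted to class~1 in the two systems, using the identity \eqref{eq:t} (which makes the class~1 sub-system exactly critically loaded in both) together with Little's law and the insensitivity of the Erlang loss formula, so that the capacity lost to blocking in $L$ is recovered in $O$ by the queue and translates into at least as many class~1 completions.
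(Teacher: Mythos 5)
There are genuine gaps in your proposal, and you abandon the very argument that actually works.

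\textbf{The isolation claim is false.} You assert that ``any class~2 customer currently in service has remaining processing time strictly larger than $\tau$,'' and conclude that class~1 customers in $O$ form an autonomous $M/GI/s_\lambda+GI$ sub-system. But class~2 means \emph{initial} service time exceeds $\tau$; after partial service a class~2 customer's remaining time can drop below $\tau$, at which point a newly arriving class~1 customer with service time close to $\tau$ does \emph{not} preempt it under SRPT and instead joins the queue. So class~1 customers in $O$ can wait behind class~2 customers and the sub-system is not autonomous. As a consequence, your reduction to the inequality $\Th_{O,1}\ge\Th_{L,1}$ is both unsupported and unnecessarily strong: the paper only establishes the weaker bound $\Th_{L,1}\le\Th_{O,1}+\Th_{O,2}=\Th_O$, and does so by allowing a class~1 customer in $L$ to be matched to a customer of \emph{either} class in $O$. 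Locking the matching inside class~1 is exactly what fails.

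\textbf{You dismiss the argument that works, and replace it with a non-proof.} You write that ``a naive event-by-event induction on the joint process therefore fails,'' citing the asynchrony of completion epochs and the possibility that a preempted customer in $O$ abandons. In fact the paper's proof \emph{is} an induction over class~1 arrival epochs; what makes it work is a dynamic matching invariant (each class~1 customer in service in $L$ is matched to a customer in $O$ who is either already finished or in service with remaining time no longer), together with \emph{rematching}: when a newly arriving class~1 customer preempts an already-matched customer in $O$, the matched class~1 customer in $L$ is transferred to an unmatched customer in $O$ with even shorter remaining time. This handles preemptions and keeps the argument pathwise and non-asymptotic. The alternative you sketch --- ``cumulative work,'' Little's law, and Erlang insensitivity --- is too vague to assess and points to steady-state identities rather than to the direct sample-path inequality the proposition asserts; it does not amount to a proof, and Little's law in particular requires you to already control the waiting-time distribution, which you do not.
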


\begin{proof}
We consider two coupled systems which are empty initially and see exactly the same customers. We next introduce a mechanism to match each class 1 customer served in system $L$ to a customer in system $O$ who finishes service no later than the customer in system $L$. In particular, we will show that, at any time $t$, for each class 1 customer who is in service in system $L$, there is a matched customer who is either in service with an equal or shorter remaining processing time in system $O$, or has already finished service in system $O$. In addition, each customer in system $O$ is matched with at most one class 1 customer in system $L$. We prove this claim and construct the matching by induction on the consecutive arrival epochs of class 1 customers.

The above claim is trivially true before the arrival of the first class 1 customer. For the inductive step, we suppose that it is true before the arrival of the $k$-th class 1 customer, and proceed to show that it holds after that arrival. We refer to the $k$-th class 1 customer as customer $k$. If customer $k$ is lost in system $L$, then the claim is trivially true. If customer $k$ enters service in system $L$ (by either joining an empty server or preempting a class 2 customer in service), then we consider the following three scenarios that can happen in system $O$ upon the arrival of customer $k$.

{\bf Case I.} In system $O$, customer $k$ joins an empty server or preempts a customer who is not matched with any customer in system $L$. Then, we can match customer $k$ in system $L$ with customer $k$ in system $O$. 

{\bf Case II.} In system $O$, customer $k$ preempts a customer who is already matched with a class 1 customer in system $L$. We refer to this preempted customer in system $O$ and the matched customer in system $L$ as customer $k_O^{\prime}$ and customer $k_L^{\prime}$ respectively. Note that in this case, before the arrival of customer $k$, system $L$ has strictly less than $s^{\lambda}$ class 1 customers in service, while there are $s^{\lambda}$ customers in service in system $O$. Based on the inductive assumption, in system $O$, there must be a customer in service that has not been matched with any class 1 customer in system $L$ yet. We refer to this customer as customer $k_O^{\prime\prime}$. As customer $k_O^{\prime}$ is preempted instead of customer $k_O^{\prime\prime}$, customer $k_O^{\prime\prime}$ must have a shorter remaining processing time than customer $k_O^{\prime}$. First, we rematch customer $k_L^{\prime}$ in system $L$ with customer $k_O^{\prime\prime}$ in system $O$. Note that as the remaining processing time of customer $k_L^{\prime}$ is larger than customer $k_O^{\prime}$ based on our inductive assumption, customer $k_O^{\prime\prime}$ will finish service before customer $k_L^{\prime}$. Then, we match customer $k$ in system $L$ with customer $k$ in system $O$. 

{\bf Case III.} In system $O$, customer $k$ waits in the queue. Similar to Case II, in this case, before the arrival of customer $k$, system $L$ has strictly less than $s^{\lambda}$ class 1 customers in service, while there are $s^{\lambda}$ customers in service in system $O$. Based on the inductive assumption, in system $O$, there must be a customer who has not been matched with any class 1 customer in system $L$ yet. We refer to this customer as customer $k^{\prime}_O$. In addition, this customer must have a shorter remaining processing time than customer $k$. In this case, we match customer $k$ in system $L$ with customer $k_O^{\prime}$ in system $O$.

Under the matching mechanism described above, each class 1 customer who gets served in system $L$ is matched with a customer who gets served in system $O$, and this matched customer in system $O$ is not matched with any other customers in system $L$. Thus, $N_{L,1}(t)\leq N_{O,1}(t)+N_{O,2}(t)$ for each $t \geq 0$, sample path by sample path. Then, we have
\[\lim_{t\rightarrow\infty} \frac{N_{L,1}(t)}{t} \leq \lim_{t\rightarrow\infty} \frac{N_{O,1}(t)+N_{O,2}(t)}{t},\]
i.e., $\Th_{L,1} \leq \Th_{O}$, as desired.
\end{proof}

\subsection{Asymptotic Performance in the SRPT $M/GI/s+GI$ Queue} \label{Asymperf}

In what follows, we consider a particular ``tagged'' customer $c$ arriving to a random system state drawn from the system's steady-state distribution. We denote by ``$\Serv_c$'' the event that customer $c$ is served and ``$\Ab_c$'' the event that customer $c$ abandons the system. We also write $V_c$ as the virtual waiting time assuming customer $c$ has infinite patience, and $W_c$ as her actual waiting time, i.e., $W_c=\min\{V_c, T_c\}$, where we recall that $T_c$ is the patience time of customer $c$. 

In Theorem \ref{thm:srpt}, we derive limits for several key performance measures of the $M/GI/s^{\lambda}+GI$ SRPT queue in steady state. The main observation is the state-space collapse. In particular, in the many-server limit, all class 1 customers are served immediately upon arrival and no class 2 customers are served, i.e., class 2 customers all abandon the queue.

\begin{theorem} \label{thm:srpt}
For the sequence of $M/GI/s^\lambda+GI$ queues under SRPT with $\rho^\lambda = \lambda/s_\lambda\mu > 1$ held fixed and the threshold, $\tau$, as defined in \eqref{eq:t}:
\begin{enumerate} 
\item[(a)] $\lim_{\lambda \rightarrow \infty} \PP(\Serv_c^{\lambda}| S_c \leq \tau) = 1 \mbox{ and }
\lim_{\lambda \rightarrow \infty} \PP(\Serv_c^{\lambda}| S_c > \tau) = 0$.
\item[(b)] $\lim_{\lambda \rightarrow \infty} \E[W_c^{\lambda}|\Serv_c] = \lim_{\lambda \rightarrow \infty} \E[W_c^{\lambda}|S_c\leq\tau]= 0$.
\item[(c)] $\lim_{\lambda \rightarrow \infty}\E[W_c^{\lambda}|\Ab_c] = \lim_{\lambda \rightarrow \infty}\E[W_c^{\lambda}|S_c > \tau] = 1/\theta.$
\item[(d)] $\lim_{\lambda \rightarrow \infty} \E[W_c^{\lambda}] = (1-G(\tau))/\theta.$
\end{enumerate}
\end{theorem}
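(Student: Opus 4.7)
The plan is to prove part (a) first and deduce (b)--(d) from it. For (a), I establish a two-sided bound on the total throughput $\Th_O^\lambda$. The lower bound comes from Proposition \ref{prop:main}: in $L$, class 1 customers experience an $M/GI/s_\lambda/s_\lambda$ Erlang loss queue with offered load $\lambda G(\tau)\,\E[S\mid S\le\tau] = s_\lambda$ (critically loaded), whose blocking probability $B(s_\lambda,s_\lambda)$ vanishes as $s_\lambda\to\infty$ by classical Erlang-B asymptotics, so $\Th_{L,1}^\lambda/\lambda \to G(\tau)$, and Proposition \ref{prop:main} then gives $\Th_O^\lambda/\lambda \ge G(\tau)(1-o(1))$. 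For the matching upper bound, let $p_\lambda(s) := \PP(\Serv_c^\lambda \mid S_c = s)$ so that $\Th_O^\lambda/\lambda = \int_0^\infty p_\lambda(s) g(s)\,ds$; work conservation (useful server-time per unit time is at most $s_\lambda$) yields $\int_0^\infty p_\lambda(s)\, s g(s)\,ds \le s_\lambda/\lambda = G(\tau)\,\E[S\mid S\le\tau]$, and the linear program of maximizing $\int p g$ subject to this work bound and $0 \le p \le 1$ has unique optimum $p^\ast = \mathbf{1}(s\le\tau)$ of value $G(\tau)$ by the defining property of $\tau$. Therefore $\Th_O^\lambda/\lambda \le G(\tau)$, and sandwiching gives $\Th_O^\lambda/\lambda \to G(\tau)$.

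To extract the class split, define $\alpha_\lambda := \int_0^\tau (1 - p_\lambda) g\,ds$ (missed class 1 mass) and $\beta_\lambda := \int_\tau^\infty p_\lambda\, g\,ds$ (served class 2 mass); throughput convergence gives $\alpha_\lambda - \beta_\lambda \to 0$, while the work bound combined with $s \ge \tau$ on class 2 and $s \le \tau$ on class 1 yields
\[
\int_0^\tau (1 - p_\lambda)(\tau - s) g(s)\,ds + \int_\tau^\infty p_\lambda(s)(s - \tau) g(s)\,ds \to 0.
\]
Non-negativity of each integrand forces both integrals to vanish, and a short approximation argument using the continuity of $g$ at $\tau$ upgrades this to $\alpha_\lambda \to 0$ and $\beta_\lambda \to 0$, which is exactly the content of (a). For (c), $\PP(\Ab_c^\lambda \mid S_c > \tau) \to 1$ from (a) and the class 2 queue grows without bound in the overloaded regime, so that $V_c^\lambda \to \infty$ in probability along the class 2 stream; with $T_c$ independent of the queue state, $W_c^\lambda = \min(V_c^\lambda, T_c) \to T_c$ in probability, and dominated convergence ($W_c^\lambda \le T_c$, $\E[T] < \infty$) yields $\E[W_c^\lambda \mid S_c > \tau] \to \E[T] = 1/\theta$ and, in the same way, $\E[W_c^\lambda \mid \Ab_c^\lambda] \to 1/\theta$. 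Part (d) is then immediate from the law of total expectation, once (b) provides the class 1 side.

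Part (b) requires $\E[W_c^\lambda \mid S_c \le \tau] \to 0$, which by the boundedness of $W$ by $T$ reduces to showing $\E[W_c^\lambda \mid S_c \le \tau, \Serv_c^\lambda] \to 0$. I would attack this by bounding the steady-state class 1 queue length in $O$ by $o(\lambda)$ and then applying Little's law to the class 1 stream (arrival rate $\lambda G(\tau)$, throughput $\sim \lambda G(\tau)$ from (a)). A natural approach is to couple $O$ sample-path-wise with a preemptive two-class priority $M/GI/s_\lambda + GI$ queue in which class 1 is served FCFS; the class 1 sub-system there is a critically-loaded $M/GI/s_\lambda + GI$ queue whose mean wait vanishes in the many-server limit. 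The main obstacle is precisely this waiting-time control: the coupling with the loss system is tuned to match throughputs and says nothing directly about queue lengths or waits, while SRPT's preemption among class 1 customers themselves (a shorter class 1 arrival can preempt a class 1 customer in service) prevents any straightforward monotone sample-path comparison with a FCFS priority system. Rigorously bounding the class 1 queue length probably requires a finer fluid-limit characterization of the joint remaining-service/remaining-patience distribution of customers in the queue, which in turn builds on the state-space collapse established in (a).
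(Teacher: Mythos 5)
Your treatment of parts (a), (c), and (d) essentially matches the paper's: the same Erlang-loss asymptotics for $\Th_{L,1}^\lambda/\lambda \to G(\tau)$ and the same linear-programming argument for (a) (your formulation in terms of $\alpha_\lambda$ and $\beta_\lambda$ works directly with the prelimit sequences rather than the pointwise limit $\bar\gamma(x)$ the paper introduces, which is a minor but cleaner variation), the same weak-convergence plus dominated-convergence argument for (c), and (d) by total expectation. The gap is exactly where you flag it, part (b), but the route you sketch --- a class 1 queue-length bound, Little's law, a finer fluid characterization, or a coupling with a FCFS-within-class priority system --- is not what the paper does and is far more than is needed. The missing idea is that part (a) already controls the \emph{distribution of the virtual wait}: conditionally on $S_c\le\tau$, $\PP(\Serv_c^\lambda)=\PP(T_c>V_c^\lambda)$, and $T_c$ is independent of the system state and hence of $V_c^\lambda$, so for any $\epsilon$ with $F(\epsilon)>0$,
\[
\PP\bigl(V_c^\lambda>\epsilon\mid S_c\le\tau\bigr)\;\le\;\frac{\PP\bigl(T_c\le V_c^\lambda\mid S_c\le\tau\bigr)}{F(\epsilon)}\;\longrightarrow\;0.
\]
Thus $(V_c^\lambda\mid S_c\le\tau)\Rightarrow 0$ directly from (a); since $0\le W_c^\lambda\le V_c^\lambda$ this gives $(W_c^\lambda\mid S_c\le\tau)\Rightarrow 0$, and dominated convergence via the bound $W_c^\lambda\le T_c$ with $\E[T_c]<\infty$ yields $\E[W_c^\lambda\mid S_c\le\tau]\to 0$. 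The paper packages this by ``considering'' a patience distribution with $f(0)>0$ (e.g.\ exponential) and noting that the convergence in (a) is insensitive to the patience law, but the substance is exactly the displayed inequality. No new sample-path coupling, no workload or queue-length control, and no Little's-law step are required: (b) is a short corollary of (a) plus the independence of the patience time from the system state, and you should drop the Little's-law plan in favor of this argument.
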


The proof of Theorem \ref{thm:srpt} can be found in Appendix \ref{pf:thm1}. We note from the theorem that the steady-state probability of abandonment and various expected waiting-time measures are insensitive to the patience-time distribution beyond its mean, and depend solely on the service-time distribution. This lies in contrast to performance in the $M/GI/s+GI$ queue under FCFS, where the system's performance depends on the patience-time distribution beyond its mean \cite{whitt2005engineering, whitt2006fluid}.

The following corollary follows directly from Theorem \ref{thm:srpt} parts (b) and (c). It demonstrates the desirable performance of SRPT.

\begin{corollary}\label{cor:SRPT}
For the sequence of $M/GI/s^\lambda+GI$ queues, SRPT asymptotically minimizes the steady-state waiting time conditional on being served, and asymptotically maximizes the steady-state waiting time conditional on abandoning
\end{corollary}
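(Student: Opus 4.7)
The plan is that the corollary follows from parts (b) and (c) of Theorem~\ref{thm:srpt} together with two uniform bounds over scheduling policies: a trivial lower bound $\E[W_c^{\lambda,\pi}\mid\Serv_c]\ge 0$ (since $W_c\ge 0$ pathwise) and a nontrivial upper bound $\limsup_\lambda \E[W_c^{\lambda,\pi}\mid\Ab_c]\le 1/\theta$. The first bound gives the minimization claim immediately, since Theorem~\ref{thm:srpt}(b) matches it.

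For the upper bound, I would work within the natural class of scheduling policies that do not use patience-time information (consistent with the rest of the paper: SRPT itself uses only service-time information). Under such a policy $\pi$, the virtual waiting time $V_c^{\lambda,\pi}$ of a tagged customer depends only on arrival times and service requirements, and is therefore independent of $T_c$. Since $W_c=T_c$ on the abandonment event $\Ab_c=\{V_c^{\lambda,\pi}>T_c\}$, I would condition on $V_c^{\lambda,\pi}$ and apply the elementary inequality $\E[T\,\mathbf{1}(T<v)]\le \E[T]\,F(v)$ (equivalent to $\E[T\mid T<v]\le\E[T]$ for any nonnegative $T$, which itself follows from stochastic domination $T\mid(T<v)\preceq_{st} T$) to obtain
\[
\Prob(\Ab_c^{\lambda,\pi})\,\E\!\bigl[W_c^{\lambda,\pi}\mid\Ab_c\bigr]
 =\E\!\bigl[T_c\,\mathbf{1}(T_c<V_c^{\lambda,\pi})\bigr]
 \le\frac{1}{\theta}\,\Prob(\Ab_c^{\lambda,\pi}),
\]
so that $\E[W_c^{\lambda,\pi}\mid\Ab_c]\le 1/\theta$ for every $\lambda$ and every such $\pi$. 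Theorem~\ref{thm:srpt}(c) says SRPT attains the bound $1/\theta$ in the limit, yielding the maximization claim.

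The main obstacle I anticipate is delineating the admissible policy class precisely. The upper bound can fail for patience-aware policies: a preemptive priority rule that serves the shortest-patience customers first forces long-patience customers to wait and abandon, which pushes $\E[T_c\mid\Ab_c]$ above $\E[T_c]=1/\theta$. The corollary's ``among all scheduling policies'' must therefore be read as ranging over patience-blind policies, which matches the framework implicit throughout the paper (since patience is a latent customer attribute and the scheduling rules studied, including SRPT, FCFS, LCFS, exploit only service-time information). Once this restriction is made explicit, the two claims are immediate from the matching extremes above.
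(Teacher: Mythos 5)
Your proof is correct and fills in what the paper leaves implicit with its terse assertion that the corollary ``follows directly'' from parts (b) and (c) of Theorem~\ref{thm:srpt}. The minimization half really is immediate from nonnegativity of waiting times, and your uniform upper bound $\E[W_c^{\lambda,\pi}\mid\Ab_c]\le 1/\theta$ for patience-blind $\pi$, obtained by writing $\Prob(\Ab_c)\,\E[W_c\mid\Ab_c]=\E[T_c\mathbf{1}(T_c<V_c)]$, using $V_c\perp T_c$, and applying $\E[T\mid T<v]\le\E[T]$, is exactly the missing ingredient for the maximization half. The stochastic-domination step $T\mid(T<v)\preceq_{st}T$ is sound, and the independence of $V_c$ from $T_c$ holds for patience-blind policies because the virtual wait (defined under infinite patience for the tagged customer) is a function of arrival epochs, service requirements, and the patience of \emph{other} customers only. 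Your caveat is also a genuine observation that the paper does not make explicit: the abstract and introduction phrase the claim as holding ``among all scheduling policies,'' yet a patience-aware preemptive rule that serves short-patience customers first would cause precisely the long-patience customers to abandon, giving $\E[T_c\mid\Ab_c]>1/\theta$ and violating the stated maximality. The corollary should therefore be read as a statement about patience-blind policies, the class actually studied throughout the paper.
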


Let $\Th_M^{\lambda}$ denote the maximum throughput of the $M/GI/s^{\lambda}+GI$ queue. While the maximum throughput is not necessarily achieved by SRPT, we show in the following proposition that SRPT maximizes the throughput asymptotically.
\begin{proposition} \label{lm:th}
For the sequence of $M/GI/s^\lambda+GI$ queues,
\[\lim_{\lambda\rightarrow\infty} \frac{\Th_O^{\lambda}}{\lambda}=\limsup_{\lambda\rightarrow\infty} \frac{\Th_M^{\lambda}}{\lambda}=G(\tau),\]
i.e., SRPT asymptotically maximizes the throughput among all service disciplines.
\end{proposition}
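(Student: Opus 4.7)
My plan is to prove the two claimed equalities separately. For the lower bound $\lim_{\lambda\to\infty}\Th_O^\lambda/\lambda = G(\tau)$, I would appeal directly to Theorem \ref{thm:srpt}(a). Because arrivals are Poisson, PASTA gives $\Th_O^\lambda = \lambda\, \PP(\Serv_c^\lambda)$, and conditioning on the service-time class yields
\[
\PP(\Serv_c^\lambda) \;=\; G(\tau)\,\PP(\Serv_c^\lambda \mid S_c\leq \tau) \,+\, (1-G(\tau))\,\PP(\Serv_c^\lambda \mid S_c>\tau) \;\longrightarrow\; G(\tau).
\]
In particular, since SRPT is an admissible scheduling discipline, this also gives $\limsup_\lambda \Th_M^\lambda/\lambda \geq G(\tau)$.

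For the matching upper bound $\limsup_\lambda \Th_M^\lambda/\lambda \leq G(\tau)$, I would work at the sample-path level, using only work conservation. Fix an arbitrary scheduling policy $M$. Since each of the $s_\lambda$ servers does at most one unit of work per unit time, the sum of service requirements of all customers whose service has completed by time $t$ is bounded by $s_\lambda t$. If $A^\lambda(t)$ is the arrival count and $S^{(t)}_{(1)}\leq\cdots\leq S^{(t)}_{(A^\lambda(t))}$ are the order statistics of the service times of arrivals by time $t$, then the $N_M^\lambda(t)$ completed services form a size-$N_M^\lambda(t)$ subset of these and hence have total service time at least the sum of the $N_M^\lambda(t)$ smallest. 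This yields the pathwise inequality
\[
\sum_{j=1}^{N_M^\lambda(t)} S^{(t)}_{(j)} \;\leq\; s_\lambda t.
\]
By the strong law of large numbers $A^\lambda(t)/t\to\lambda$ a.s., and by Glivenko--Cantelli the empirical cdf of $\{S_i\}$ converges uniformly to $G$. Setting $\Psi(p):=\int_0^{G^{-1}(p)} s\, dG(s)$, a continuous mapping argument gives $\frac{1}{A^\lambda(t)}\sum_{j=1}^{\lfloor p A^\lambda(t)\rfloor} S^{(t)}_{(j)} \to \Psi(p)$ uniformly in $p\in[0,1]$. Evaluating at the random index $p = N_M^\lambda(t)/A^\lambda(t)$ and dividing the pathwise bound by $A^\lambda(t)$, any subsequential limit $q^\ast:=\limsup_t N_M^\lambda(t)/A^\lambda(t)$ satisfies $\lambda\,\Psi(q^\ast)\leq s_\lambda$. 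The defining equation \eqref{eq:t} reads $\lambda\,\Psi(G(\tau))=s_\lambda$, and $\Psi$ is strictly increasing on $\{G<1\}$, so $q^\ast\leq G(\tau)$, i.e., $\Th_M^\lambda/\lambda\leq G(\tau)$ for every $\lambda$. Combining the two bounds gives the proposition.

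The main obstacle is the step of evaluating the Glivenko--Cantelli limit at the random index $N_M^\lambda(t)/A^\lambda(t)$: the definition \eqref{eq:th} of $\Th_{M,i}$ presumes a long-run limit, but for an arbitrary (possibly non-stationary) policy only subsequential limits are guaranteed. The fix is to exploit the uniformity in $p$ of the Glivenko--Cantelli convergence together with the continuity of $\Psi$, which decouples the (random) index from the (deterministic) limit and reduces the argument to the monotone inequality $\Psi(q^\ast)\leq\Psi(G(\tau))$.
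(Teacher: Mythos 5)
Your proof is correct, and for the upper bound it takes a genuinely different route from the paper. The paper fixes an arbitrary policy, introduces its stationary service probability $\gamma(x)$, observes that the steady-state number in service gives $\lambda\int_0^\infty x g(x)\gamma(x)\,dx\le s^\lambda$, and then solves the infinite-dimensional linear program $\max \lambda\int g(x)\gamma(x)\,dx$ subject to this constraint and $\gamma\in[0,1]$; the optimizer is the threshold rule $\gamma^*(x)=\mathbf{1}\{x\le\tau\}$ with value $\lambda G(\tau)$. Your argument is the pathwise, order-statistics counterpart of the same physical fact: the cumulative work done by the servers by time $t$ is at most $s_\lambda t$, so the set of completed jobs must have total size at least the sum of the $N_M^\lambda(t)$ smallest service times among the arrivals, and passing to the limit via a functional LLN/Glivenko--Cantelli argument pins down $q^*\le G(\tau)$. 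The two arguments encode the same extremal principle (to maximize the number served under a work budget, serve the shortest jobs first), but they package it differently: the LP formulation is cleaner and reuses the same optimization already appearing in Lemma \ref{lm:max}, whereas your sample-path version is more elementary in flavour and is robust to policies for which a stationary $\gamma$ (or the long-run limit in \eqref{eq:th}) may not exist, which you correctly flag as the main technical point to handle. Both yield the same non-asymptotic bound $\Th_M^\lambda\le\lambda G(\tau)$. For the lower bound, your use of Theorem \ref{thm:srpt}(a) together with rate conservation (the identity $\Th_O^\lambda=\lambda\,\PP(\Serv_c^\lambda)$ is rate conservation rather than PASTA proper, but the identity is the one the paper itself uses) matches the paper's closing step $\lim_\lambda \Th_O^\lambda/\lambda=G(\tau)$, so there is no circularity there.

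Two small points worth making explicit if you were to write this up in full: (i) the uniform-in-$p$ convergence of $\frac{1}{n}\sum_{j\le pn}S_{(j)}$ to $\Psi(p)=\int_0^p G^{-1}(u)\,du$ follows from pointwise convergence (functional LLN for trimmed sums, valid since $\E[S]<\infty$) plus the Dini/Polya argument using monotonicity in $p$ and continuity of $\Psi$ on $[0,1]$; and (ii) the work-budget bound $\sum_{j\in\text{completed}}S_j\le s_\lambda t$ tacitly uses that the system starts empty, which is the standing assumption in the coupling construction.
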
 

The proof of Proposition \ref{lm:th} can be found in Appendix \ref{app:prop2}. We note from the proposition that for a fixed value of $\rho$, the asymptotically maximal throughput depends on the service-time distribution. This is in contrast to the throughput of the $M/GI/s^{\lambda}+GI$ queue under FCFS (or LCFS) where, for a fixed value of $\rho>1$, the throughput scaled by $\lambda$ is equal to $1/\rho$ \cite{whitt2006fluid}. Furthermore, from the definition of $\tau$ in \eqref{eq:t}, we have
\[\lambda G(\tau)\E[S|S\leq\tau]=s_{\lambda}=\frac{\lambda \E[S]}{\rho},\]
which implies that
\[G(\tau)=\frac{1}{\rho}\frac{\E[S]}{\E[S|S\leq\tau]}.\]
Because $\rho>1$, $G(\tau)<1$ (since not all customers can be served) and $\E[S|S\leq \tau]<\E[S]$ (since $G(\tau) < 1$). Then, we must have the inequality: 
\begin{equation}\label{eq:inequal}
G(\tau)>1/\rho,
\end{equation}
i.e., the throughput of SRPT is strictly larger than the throughput of FCFS in the many-server overloaded regime.

We will discuss the effect of the service-time distribution on the throughput of SRPT in more detail in section \ref{STDist}: We will illustrate through numerical examples that for fixed values of $\E[S]$ and $\rho$, the heavier the tail of the service-time distribution, the larger the throughput that SRPT can achieve.

\section{Comparison to Blind Policies} \label{perf}

In this section, we compare the performance of SRPT to blind policies that do not use the service-time information, such as FCFS and LCFS. We focus on the effect of patience-time and service-time distributions on system performance. In addition to throughput, we also consider the steady-state expected waiting time. We do so because (1) waiting time measures are generally of interest in the management of service systems, and (2) while SRPT asymptotically maximizes the throughput in the system (Proposition \ref{lm:th}), it does not necessarily minimize waiting times, and it is important to shed further light on this point. 

To compare the performance of SRPT to blind policies, we rely on steady-state fluid approximations for systems under blind policies as described in \cite{whitt2006fluid}. 
Fluid approximations are known to be remarkably accurate in large-scale overloaded systems \cite{kang2010fluid}, which is the regime that we consider (section \ref{scaling}). Throughout this section, we fix the traffic intensity $\rho=1.4$.

\subsection{The Effect of the Patience-Time Distribution} \label{fluidSection}

We consider the asymptotic throughput, scaled by $\lambda$, under the many-server overloaded scaling. 
In this case, the patience-time distribution has no effect on the throughput under any of the 
scheduling disciplines. For the SRPT queue, the throughput is equal to $G(\tau)$ by Proposition \ref{lm:th}. For all blind policies (including FCFS and LCFS), it is equal to $1/\rho$ \cite{whitt2006fluid}.

As for the steady-state waiting time, even though the patience-time distribution beyond its mean has no effect on the limiting steady-state waiting time in the SRPT queue (part (d) in Theorem \ref{thm:srpt}), it plays a central role in the performance of the FCFS queue \cite{whitt2006fluid}. In particular, in the many-server overloaded limit, the steady-state fluid waiting time of the FCFS queue is
\[
\E[T\mathbf{1}(T\leq \bar w)] + \bar w(1-F(\bar w)) \mbox{ where } \bar w= F^{-1}(1-1/\rho).
\]

We first consider patience-time distributions  with strictly increasing-hazard-rate  (IHR)  or strictly decreasing-hazard-rate (DHR). 
The hazard rate of the patience-time distribution is defined as
$h(x)=f(x)/(1-F(x))$. The distribution has IHR (DHR) if $h$ is monotonically increasing (decreasing) in $x$ on $(0,\infty)$.
For blind policies, if $h$ is decreasing in $x$ (i.e., DHR), waiting customers become increasingly patient with time. To minimize the waiting time, we should process customers who waited more first. Analogously, if $h(x)$ is increasing in $x$ (i.e., IHR), then we should process customers who waited less first.

For IHR or exponential patience-time distributions (which has a constant hazard rate), LCFS minimizes the steady-state fluid waiting among all blind policies (Proposition 3 in \cite{bassamboo2016scheduling}).
The fluid LCFS queue is described by two classes: The high-priority class is entirely served and does not wait for service, whereas the low priority class abandons entirely. Thus, the steady-state fluid waiting time under LCFS is given by $(1-1/\rho)\theta$.
On the other hand, Theorem \ref{thm:srpt} shows that the limiting steady-state expected waiting time under SRPT is $(1-G(\tau))/\theta$.
Because $G(\tau)>1/\rho$ for $\rho>1$ (see \eqref{eq:inequal}), $(1-G(\tau))/\theta<(1-1/\rho)\theta$.
This implies that SRPT outperforms all blind policies for overloaded systems with IHR or exponential patience-time distributions, when the system is large enough. For DHR patience-time distributions, FCFS minimizes the steady-state fluid waiting time among all blind policies (Proposition 3 in \cite{bassamboo2016scheduling}).
In this case, SRPT may lead to a larger expected waiting time than FCFS.

In Figure \ref{SRPTWorse}, we compare the performance of FCFS, LCFS, and SRPT under Weibull (right-side figure) or Pareto (left-side figure) patience-time distributions. For both distributions, we vary the shape parameter and adjust the scale parameter accordingly so that the mean time to abandon is equal to 1. We fix the service-time distribution to be exponential with mean equal to 1 as well. 
For LCFS and FCFS, we present the fluid limit.
For SRPT, we present $(1-G(\tau))/\theta$. 

For the Weilbull distribution, $F(x)=(1-\exp(-(x/m)^{\alpha}))\mathbf{1}(x\geq0)$, where $\alpha>0$ is referred to as the shape parameter and $m>0$ is the scale parameter. When the shape parameter is smaller than 1, it has DHR; when the shape parameter is equal to 1, it is an exponential distribution; when the shape parameter is larger than 1, it has IHR. We observe from the right plot in Figure \ref{SRPTWorse} that for small enough values of the shape parameter (i.e., $<0.6$), FCFS can achieve a shorter expected waiting time than SRPT. 

For the Pareto distribution, $F(x)=(x/m)^{\alpha}\mathbf{1}(x\geq m)$, where $\alpha>1$ is referred to as the shape parameter and $m>0$ is the scale parameter. The hazard rate function is no longer monotone on $(0,\infty)$. In particular, $h(x)=0$ for $x<m$, and $h(x)>0$ and is decreasing in $x$ for $x\geq m$. In this case, FCFS may not be optimal among blind policies (Proposition 5 in \cite{bassamboo2016scheduling}). We observe from the left plot in Figure \ref{SRPTWorse} that FCFS only achieves a shorter expected waiting time than LCFS for small enough values of the shape parameter (i.e., $<1.3$). Moreover, SRPT leads to shorter expected waiting times than both FCFS and LCFS in this case.


\begin{figure}[htp]
\centering
\includegraphics[width=.48\textwidth]{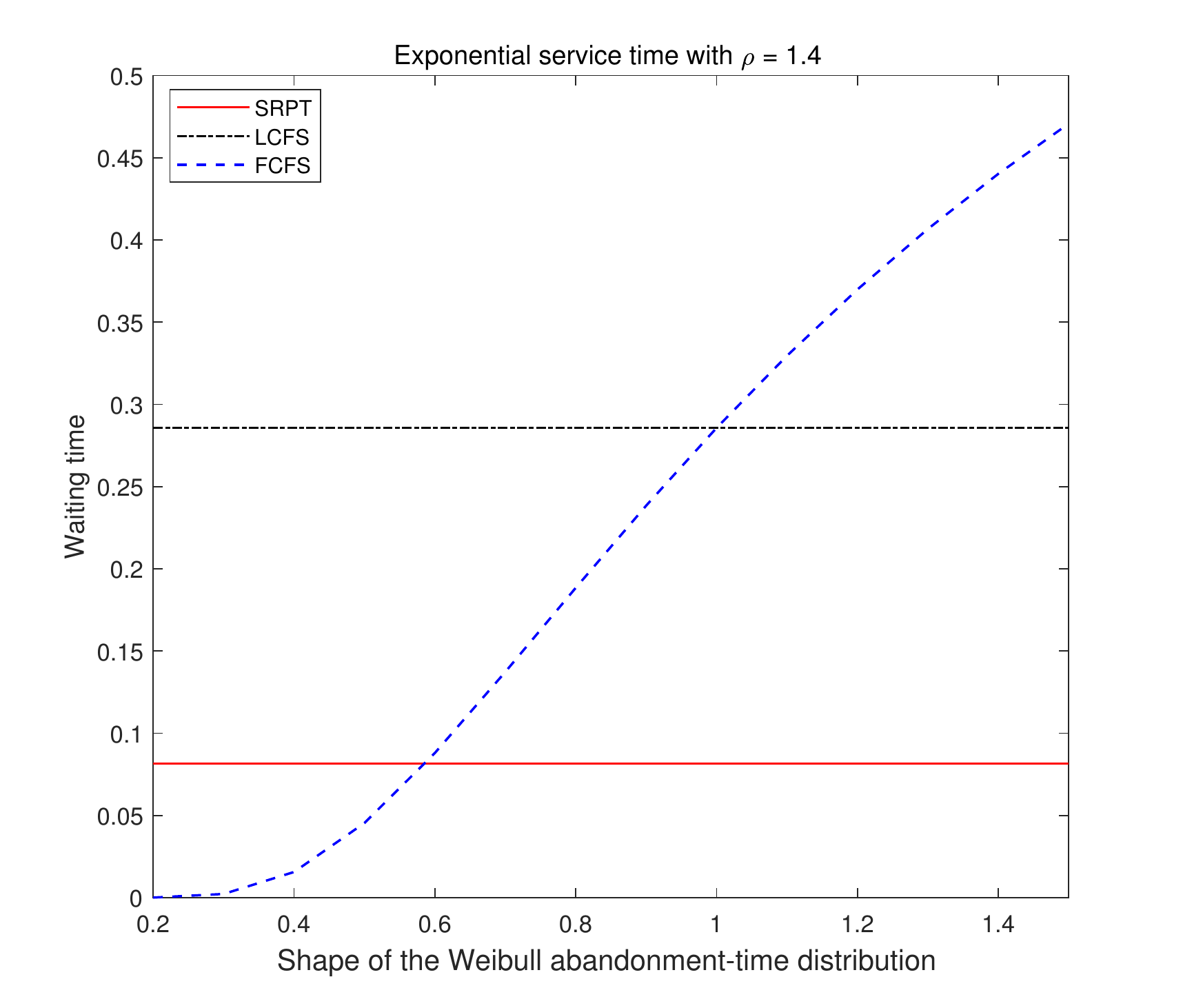}
\includegraphics[width=.48\textwidth]{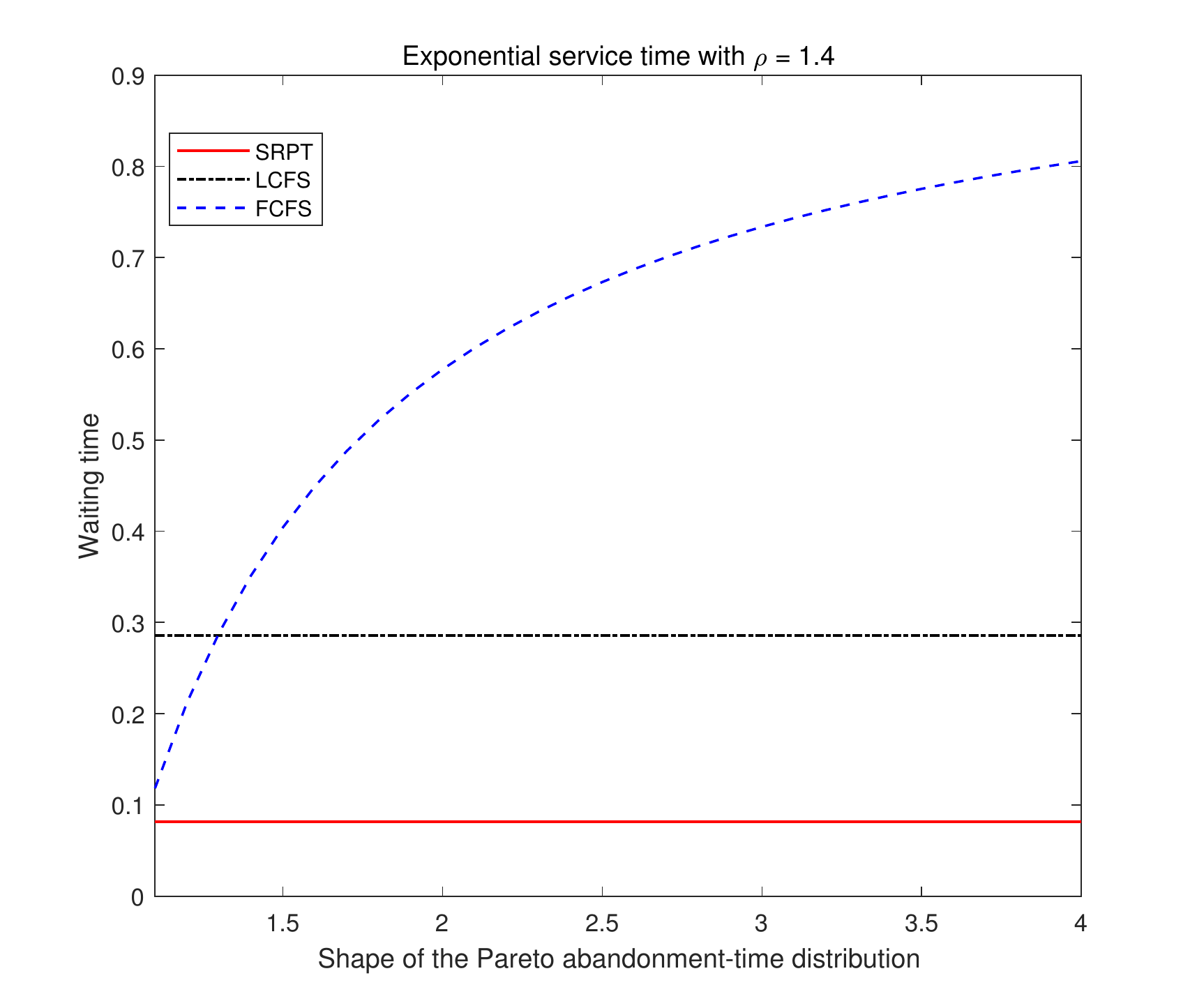}
\caption{\label{SRPTWorse} Steady-state waiting times for Weibull and Pareto abandonment under different shape parameters.  }
\end{figure}

\subsection{The Effect of the Service-Time Distribution} \label{STDist} It has been observed that in single-server queues, the advantage of SRPT is especially pronounced with a heavy tailed service-time distribution; see, for example, \cite{lin2011heavy} and \cite{chen2020power}. We next investigate whether the same holds in the multiserver setting with abandonment. 
For service times, we consider the family of Weibull distributions and the family of Pareto distributions, both with varying shape parameters. We set the patience-time distribution to be Weibull. Note that the service-time distribution has no effect on the throughput and steady-state fluid waiting times for FCFS and LCFS queues, but it plays an important role in the performance of the SRPT queue. 


In Figure \ref{throughputST} we compare the throughput for SRPT, FCFS, and LCFS when service times have Weibull (left-side figure) or Pareto (right-side figure) distributions. 
We vary the value of the shape parameter in the Weibull and Pareto service-time distributions and adjust the scale parameter accordingly so that the mean service time is fixed at 1. Note that as the shape parameter decreases, the tails of the Weibull or Pareto distributions become heavier (i.e., $1-F(x)$ decays to zero at a slower rate as $x$ increases). The patience-time distribution is fixed as a Weibull distribution with shape 0.4 and mean 1. We observe from the figure that SRPT always yields the highest throughput (as we proved in Proposition \ref{lm:th}). More importantly, the throughput of SRPT decreases as the shape parameter of the service-time distribution increases. This suggests that the heavier tail of the service time distribution leads to higher throughput in the SRPT queue. We also recall that the throughput is not affected by the patience-time distribution, so we do not reproduce identical figures for other patience-time distributions. 


In Figure \ref{waitsweibullST}, we compare the steady-state expected waiting times for SRPT, FCFS, and LCFS when service times have a Weibull distribution. Since the patience-time distribution affects the waiting times of the FCFS queue, we show two Weibull patience-time distributions with different shape parameters: 0.4 (left-side figure) and 1 (right-side figure). We first observe that when the patience-time distribution has DHR (i.e., Weibull with shape 0.4) and the service-time distribution has a large enough shape parameter (i.e., $>0.5$), FCFS can lead to a shorter steady-state average waiting time than SRPT. We also observe that similar to the throughput, for the steady-state waiting times, the greatest advantage of SRPT over FCFS or LCFS is achieved by service-time distributions with smaller shape parameters, i.e., corresponding to heavier tails. We make similar observations for the case of Pareto service-time distributions with varying shape parameters. To avoid repetition, we do not include the figures here.

\begin{figure}[htp]
\centering
\includegraphics[width=.48\textwidth]{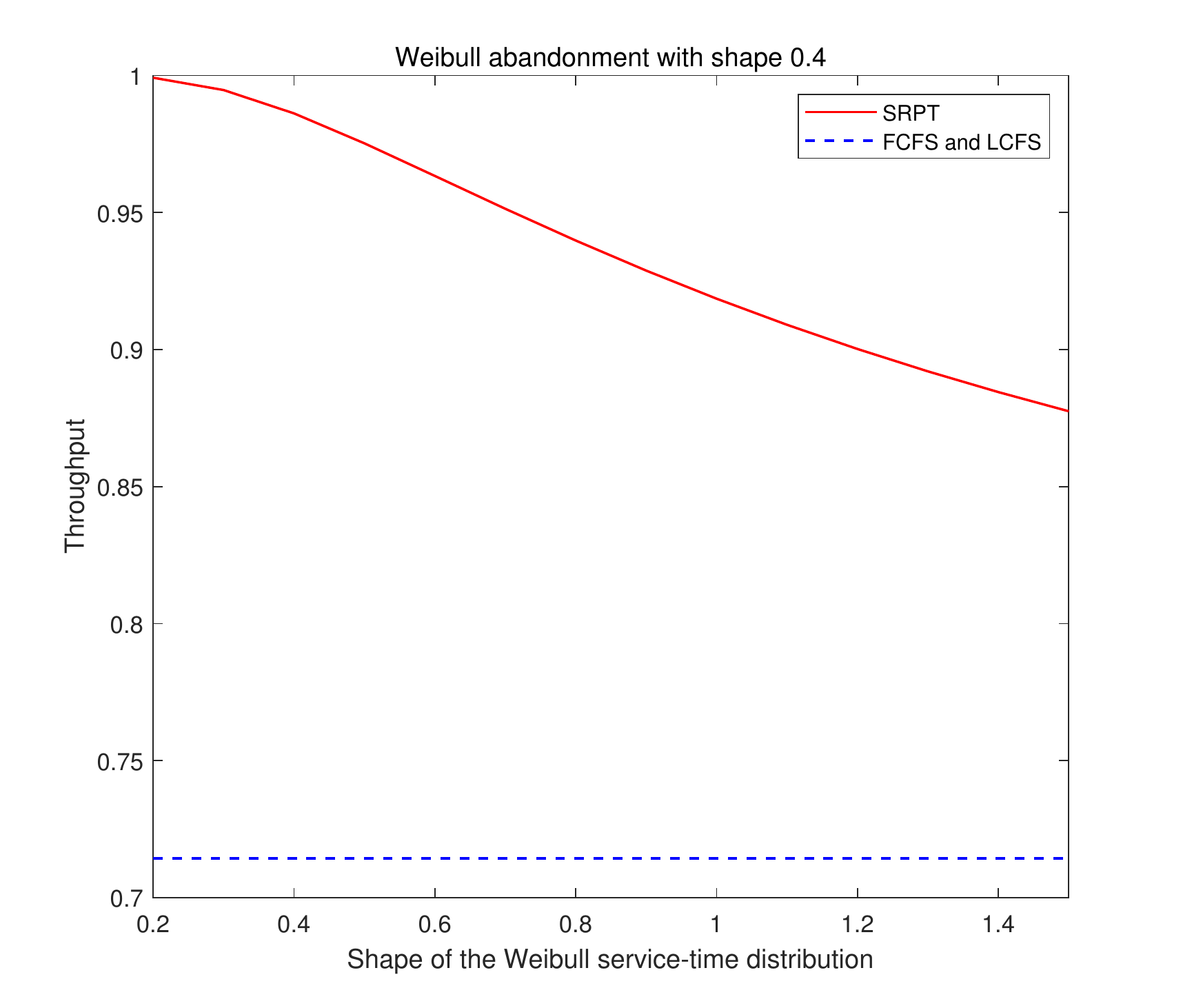}
\includegraphics[width=.48\textwidth]{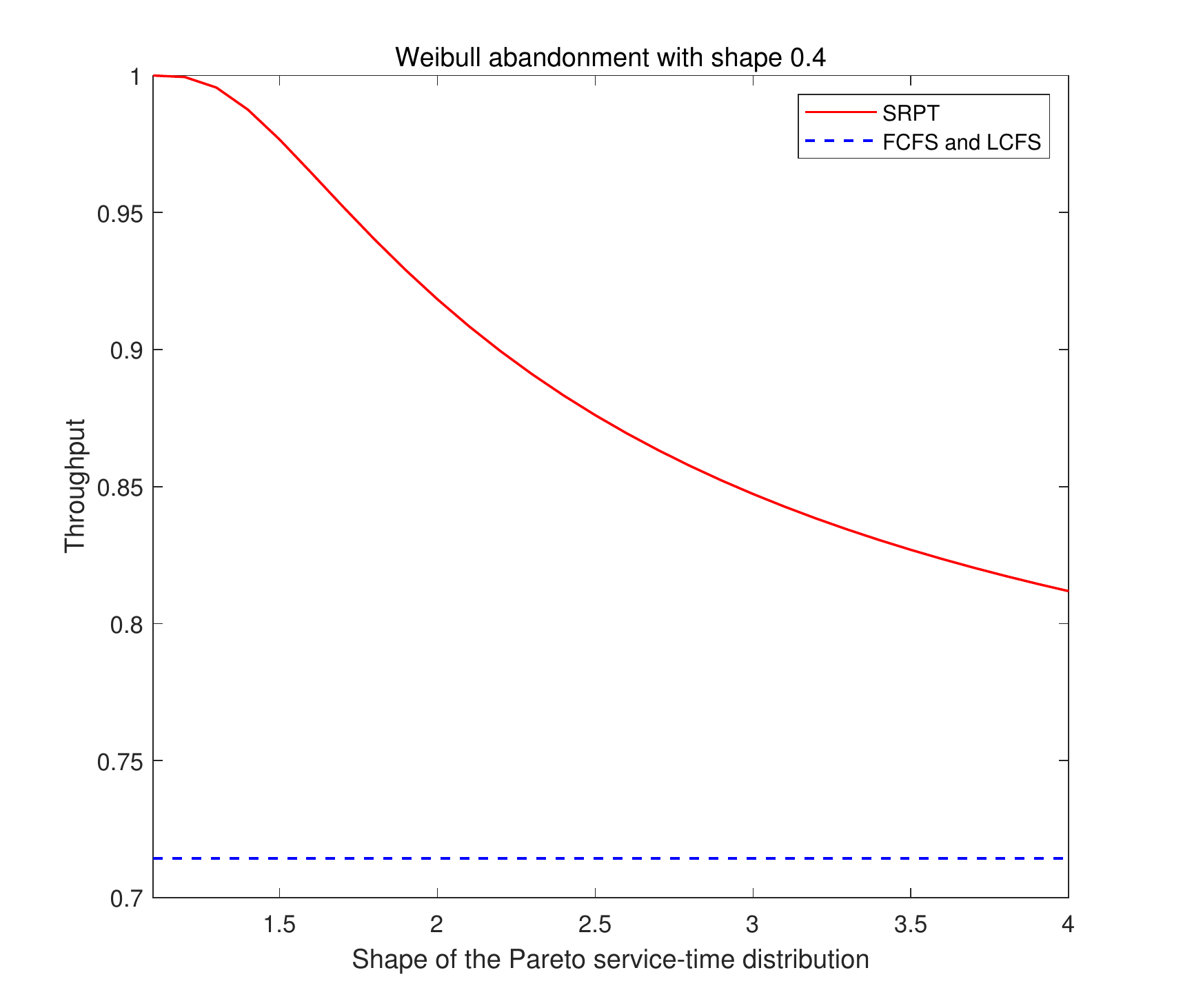}
\caption{\label{throughputST} Throughput for Weibull and Pareto service times under different shape parameters. }
\end{figure}

\begin{figure}[htp]
\centering
\includegraphics[width=.48\textwidth]{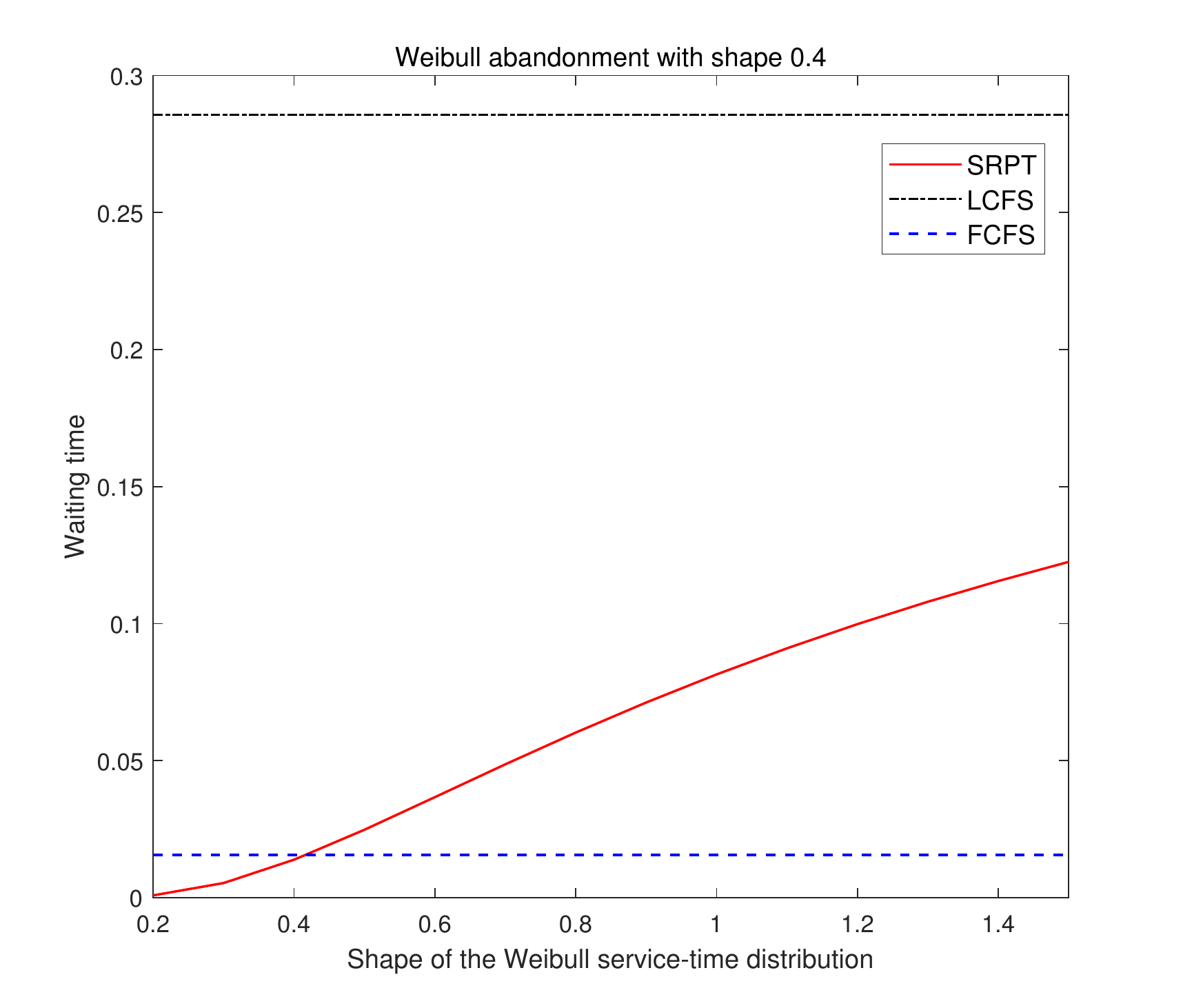} 
\includegraphics[width=.48\textwidth]{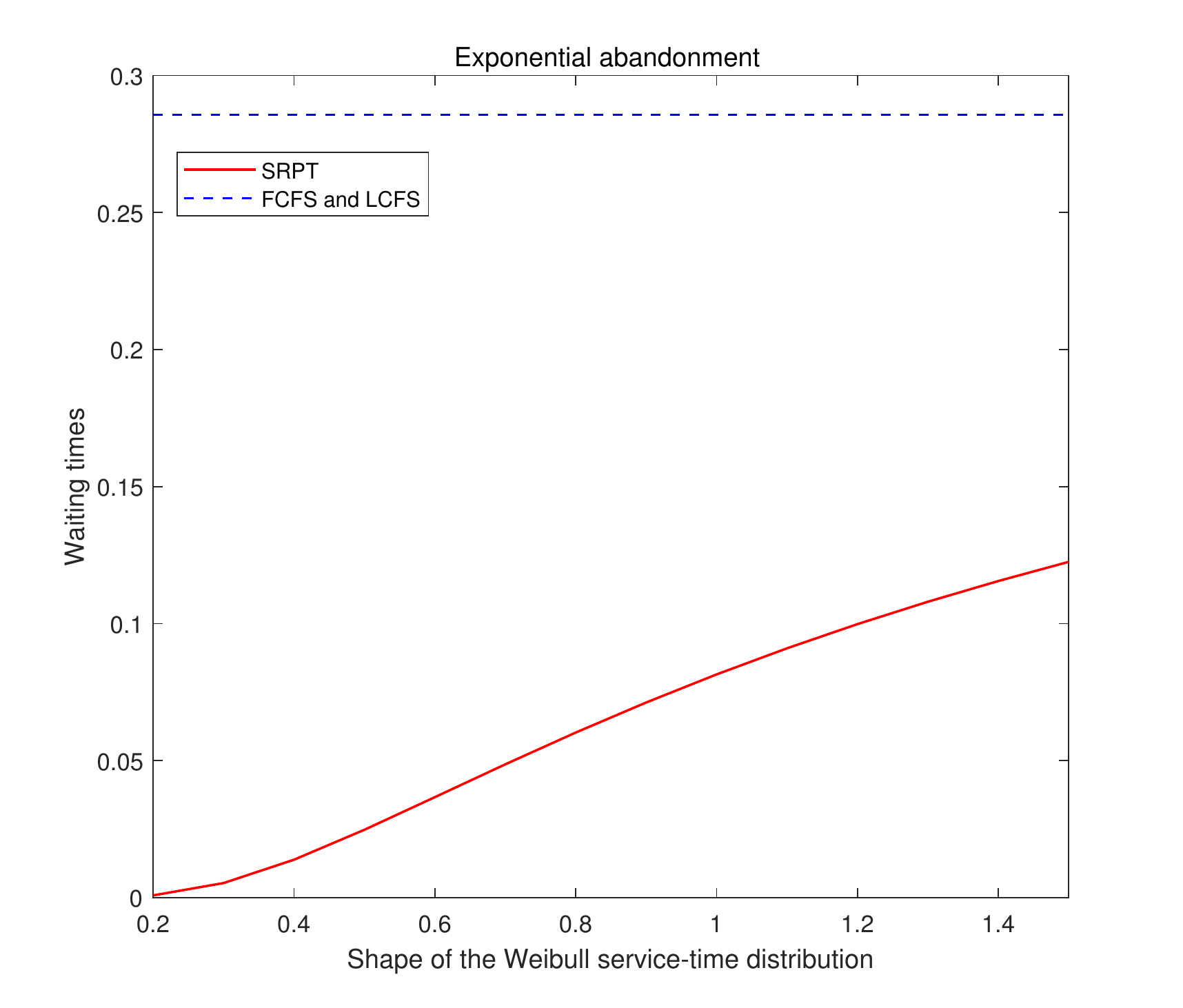}
\caption{\label{waitsweibullST} Steady-state waiting times for Weibull service times under different shape parameters. }
\end{figure}



\section{Conclusions} \label{conc}

In this paper, we presented the first theoretical analysis of the SRPT policy in multiserver queues with abandonment. We relied on a many-server asymptotic mode of analysis, and proved that the system is well approximated, in the limit, by a preemptive two-class priority system where customers with small service times (below a threshold) are served immediately, and customers with long service times (above a threshold) are not served and eventually abandon. We also showed that SRPT maximizes system throughput asymptotically. Lastly, we compared the performance of SRPT to blind policies, such as FCFS and LCFS, to gain insight into the effects of patient-time and service-time distributions.

%
%
%
 \begin{appendix}
 \section{Proof of Theorem \ref{thm:srpt}}\label{pf:thm1}
 We apply the same many-server overloaded scaling described in section \ref{scaling} to both the SRPT $M/GI/s^{\lambda}+GI$ queue and the two-class preemptive $M/GI/s^{\lambda}/s^{\lambda}$ loss queue. Let $\Th_O^{\lambda}$ be the throughput of the $\lambda$-th $M/GI/s^{\lambda}+GI$ queue, which we refer to as system $O^{\lambda}$.

 \subsection{Probability of Service}
 We begin by proving part $(a)$ in the theorem. Let $\gamma^{\lambda}(x)$ denote the steady-state probability of successfully finishing service for a customer with initial service time $x$ in system $O^{\lambda}$. We note that
\[\Th_O^{\lambda}=\lambda \int_{0}^{\infty}g(x)\gamma^{\lambda}(x)dx,\]
i.e., in stationarity, the rate at which customers finish service is the same as the rate at which they enter service.
In addition,
\begin{equation}\label{eq:little}
\lambda \left(\int_{0}^{\infty}g(x)\gamma^{\lambda}(x)dx\right)\frac{\int_{0}^{\infty}xg(x) \gamma^{\lambda}(x)dx}{\int_{0}^{\infty}g(x)\gamma^{\lambda}(x)dx} \leq s^{\lambda},
\end{equation}
i.e., the stationary number of customers in service is less than the service capacity.
Lastly, define: 
\begin{equation} \label{new} \bar \gamma(x)\equiv \lim_{\lambda\rightarrow\infty}\gamma^{\lambda}(x),~~\Ec_{\gamma}\equiv \int_{0}^{\infty}g(x)\bar \gamma(x)dx=\lim_{\lambda\rightarrow\infty}\frac{\Th_O^{\lambda}}{\lambda},~~\mbox{and}~~ \Sc_{\gamma}=\frac{\int_{0}^{\infty}xg(x)\bar \gamma(x)dx}{\int_{0}^{\infty}g(x) \bar \gamma(x)dx}.  \end{equation} 
From \eqref{eq:little}, and defining $\bar s \equiv (\rho\mu)^{-1}$, we have that:
\[ \Ec_{\gamma} \cdot \Sc_{\gamma} \leq \bar s.\]

We will need the following two lemmas, Lemma \ref{lm:th_loss} and Lemma \ref{lm:max}, which we state and prove. 

\begin{lemma}\label{lm:th_loss}
For the sequence of  two-class $M/GI/s^{\lambda}/s^{\lambda}$ queues under the preemptive priority rule,
\[\lim_{\lambda \rightarrow\infty}\frac{\Th_{L,1}^{\lambda}}{\lambda}= G(\tau).\]
\end{lemma}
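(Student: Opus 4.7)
The plan is to exploit the preemptive priority structure to reduce the analysis of class 1 customers in system $L$ to a single-class $M/GI/s^\lambda/s^\lambda$ loss queue operating at critical load, and then invoke the classical insensitivity of the Erlang-B formula together with its known asymptotic behavior at critical load.

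First, I would note that because a class 1 arrival preempts a class 2 customer whenever every server is busy but at least one class 2 customer is in service, the presence of class 2 customers never blocks or delays a class 1 arrival. Consequently, restricted to class 1, the loss system is probabilistically identical to a single-class $M/GI/s^\lambda/s^\lambda$ Erlang loss queue with Poisson arrival rate $\lambda G(\tau)$ (obtained by thinning the original stream) and service times distributed as $[S \mid S \leq \tau]$. By the defining equation \eqref{eq:t} for $\tau$, the offered load to this class 1 subsystem is
\begin{equation*}
a^\lambda := \lambda G(\tau) \cdot \E[S \mid S \leq \tau] = s^\lambda,
\end{equation*}
so the subsystem operates at critical load for every $\lambda$.

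Next, I would appeal to the insensitivity of the $M/GI/s/s$ loss queue: the stationary blocking probability equals Erlang's B formula
\begin{equation*}
B(s, a) \;=\; \frac{a^s/s!}{\sum_{k=0}^{s} a^k/k!},
\end{equation*}
which depends on the service-time distribution only through its mean. Combining this with PASTA (so that an arriving class 1 customer sees the stationary occupancy), one obtains
\begin{equation*}
\Th_{L,1}^\lambda \;=\; \lambda G(\tau)\bigl(1 - B(s^\lambda, s^\lambda)\bigr).
\end{equation*}
Dividing by $\lambda$, it remains only to show that $B(s^\lambda, s^\lambda) \to 0$ as $\lambda \to \infty$.

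This last step is the standard Erlang-B asymptotic at critical load: applying Stirling's formula to $s!$ in $B(s,s)$ yields $B(s,s) = O(1/\sqrt{s})$, which gives $\Th_{L,1}^\lambda/\lambda \to G(\tau)$ as desired. The main obstacle is not conceptual but bookkeeping: one must cite insensitivity of Erlang's formula cleanly for the truncated distribution $[S \mid S \leq \tau]$ (classical, e.g.\ via reversibility arguments) and justify the $B(s,s) \to 0$ estimate. Neither requires a new idea, so I would invoke standard references rather than reproduce the computations.
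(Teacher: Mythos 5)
Your proposal is correct and follows essentially the same route as the paper: reduce to a single-class Erlang loss system for class~1 via preemption, observe that \eqref{eq:t} makes the offered load exactly $s^\lambda$, invoke insensitivity so that Erlang's B formula applies, and conclude via $B(s^\lambda,s^\lambda)\to 0$. The only cosmetic difference is that you cite the standard $B(s,s)=O(1/\sqrt{s})$ Stirling estimate, whereas the paper derives an explicit integral representation $\Pb^\lambda=\bigl(\int_0^\infty(1+t/s^\lambda)^{s^\lambda}e^{-t}\,dt\bigr)^{-1}$ and lets the integrand grow to $e^t$.
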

\begin{proof}
We first show that for a fixed $\lambda$,
\[\Th_{L,1}^{\lambda}=\lambda G(\tau)\left(1-\left(\int_{0}^{\infty}(1+t/s^\lambda)^{s^\lambda} e^{-t}dt\right)^{-1}\right).\]
As class 1 customers have preemptive priority over class 2 customers: For a class 1 customer, the system operates like a single-class loss queue with arrival rate $\lambda G(\tau)$ and service-time distribution $[S|S\leq \tau]$. As the steady-state blocking probability depends on the service-time distribution only through its mean, and $r=\lambda G(\tau)\E[S|S\leq \tau]=s^{\lambda}$, we have the steady-state blocking probability:
\[\begin{split}
\Pb^{\lambda}:=\frac{r^{s^{\lambda}}/s^{\lambda}!}{\sum_{k=0}^{s^{\lambda}}r^k/k!}&=\left(\sum_{k=0}^{s^{\lambda}}\frac{s^{\lambda}!}{k!}r^{-(s^{\lambda}-k)}\right)^{-1}\\
&=\left(\sum_{k=0}^{s^{\lambda}}\frac{s^{\lambda}!}{(s^{\lambda}-k)! k!}k!r^{-k}\right)^{-1}\\
&=\left(\sum_{k=0}^{s^{\lambda}}{s^{\lambda} \choose k} r^{-k}\int_{0}^{\infty}t^k e^{-t}dt\right)^{-1}\\
&=\left(\int_{0}^{\infty}\sum_{k=0}^{s^{\lambda}}{s^{\lambda} \choose k} r^{-k}t^k e^{-t}dt\right)^{-1}\\
&=\left(\int_{0}^{\infty}(1+t/r)^{s^{\lambda}} e^{-t}dt\right)^{-1}=\left(\int_{0}^{\infty}(1+t/s^{\lambda})^{s^{\lambda}} e^{-t}dt\right)^{-1}.
\end{split}\]
Next, class 1 customers enter service in the loss queue at rate $\lambda G(\tau)(1-\Pb^{\lambda})$. By rate conservation, we have
$\Th_{L,1}^{\lambda}=\lambda G(\tau)(1-\Pb^{\lambda})$.
Then,
\[
\lim_{\lambda\rightarrow\infty}\frac{\Th_{L,1}^{\lambda}}{\lambda}
=G(\tau)\left(1-\lim_{\lambda\rightarrow\infty} \left(\int_{0}^{\infty}(1+t/s^{\lambda})^{s^{\lambda}} e^{-t}dt\right)^{-1}\right)=G(\tau).
\]
\end{proof}

\begin{lemma} \label{lm:max}
For the sequence of $M/GI/s^{\lambda}+ GI$ queues under SRPT, $\bar \gamma(x)=1\{x\leq \tau\}$.
\end{lemma}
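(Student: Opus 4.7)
The plan is to bypass any direct sample-path analysis of SRPT and instead extract the conclusion from two competing integral bounds on $\bar{\gamma}$: a lower bound on $\mathcal{E}_\gamma$ coming from the coupled loss system, and an upper bound on the expected ``workload per arrival'' $\int x g(x) \bar{\gamma}(x)\,dx$ coming from the server capacity. A short Chebyshev-type splitting at the threshold $\tau$ then pins down $\bar{\gamma}$ exactly.

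\textbf{Step 1 (Throughput lower bound).} Combining Proposition \ref{prop:main} with Lemma \ref{lm:th_loss}, I divide $\Th_{L,1}^{\lambda}\leq \Th_{O}^{\lambda}$ by $\lambda$ and pass to the limit to obtain $\mathcal{E}_\gamma \geq G(\tau)$, i.e.\
\[
\int_0^\infty g(x)\,\bar\gamma(x)\,dx \;\geq\; \int_0^\tau g(x)\,dx.
\]

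\textbf{Step 2 (Capacity upper bound).} Inequality \eqref{eq:little} simplifies to $\lambda\int_0^\infty x g(x)\gamma^{\lambda}(x)\,dx \leq s^\lambda$, so $\int_0^\infty x g(x)\gamma^{\lambda}(x)\,dx \leq \bar s$. Since $\gamma^{\lambda}\leq 1$ and $\int_0^\infty x g(x)\,dx = \E[S]<\infty$, dominated convergence yields
\[
\int_0^\infty x g(x)\,\bar\gamma(x)\,dx \;\leq\; \bar s \;=\; \int_0^\tau x g(x)\,dx,
\]
the last equality being exactly the defining equation \eqref{eq:t} for $\tau$ (rewritten as $s^\lambda/\lambda = \E[S\mathbf{1}(S\leq\tau)]$).

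\textbf{Step 3 (Splitting at $\tau$).} Define
\[
A \;=\; \int_0^\tau g(x)(1-\bar\gamma(x))\,dx, \qquad B \;=\; \int_\tau^\infty g(x)\bar\gamma(x)\,dx,
\]
and likewise $A',B'$ with an additional factor $x$ inside the integrals. Step 1 says $B\geq A$, while Step 2 says $B'\leq A'$. On $[0,\tau]$ one has $x\leq\tau$ and on $[\tau,\infty)$ one has $x\geq\tau$, giving $A'\leq \tau A$ and $B'\geq \tau B$. Chaining,
\[
\tau A \;\leq\; \tau B \;\leq\; B' \;\leq\; A' \;\leq\; \tau A,
\]
so every inequality is an equality. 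From $B'=\tau B$, $\int_\tau^\infty(x-\tau)g(x)\bar\gamma(x)\,dx = 0$, and since the integrand is nonnegative, $\bar\gamma(x)=0$ for a.e.\ $x>\tau$ in the support of $g$. From $A'=\tau A$, $\int_0^\tau(\tau-x)g(x)(1-\bar\gamma(x))\,dx=0$, so $\bar\gamma(x)=1$ for a.e.\ $x<\tau$ in the support of $g$. This gives $\bar\gamma(x)=\mathbf{1}\{x\leq \tau\}$, as claimed.

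\textbf{Main obstacles.} Two technical points require care. First, the existence of a pointwise limit $\bar\gamma(x)$ is not a priori obvious; the cleanest route is to observe that every subsequence of $\{\gamma^\lambda\}$, bounded in $[0,1]$, has a further subsequence converging almost everywhere (e.g.\ weak-$\ast$ in $L^\infty$ followed by extraction), then run Steps 1--3 on the subsequential limit to show it equals $\mathbf{1}\{x\leq\tau\}$, and conclude that the full sequence converges to this unique limit. Second, the interchange of limit and integral in Step 2 needs the dominating function $x g(x)$, which is integrable precisely because $\E[S]<\infty$. With these caveats the proof is short; the conceptual heart is recognizing that the competition between $\mathcal{E}_\gamma\geq G(\tau)$ and $\int xg\bar\gamma \leq \int_0^\tau xg$ can only be reconciled by the indicator profile, with no need to establish monotonicity of $\gamma^\lambda$ in $x$ sample-path-wise.
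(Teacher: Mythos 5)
Your argument is correct and rests on the same two pillars as the paper's proof: the throughput lower bound $\mathcal{E}_\gamma \geq G(\tau)$ obtained by combining Proposition~\ref{prop:main} with Lemma~\ref{lm:th_loss}, and the server-capacity bound $\int_0^\infty x g(x)\bar\gamma(x)\,dx \leq \int_0^\tau xg(x)\,dx$ derived from \eqref{eq:little} and the definition of $\tau$ in \eqref{eq:t}. Where the paper packages the final step as a linear program (minimize $\int xg\gamma$ subject to $\int g\gamma \geq G(\tau)$, $\gamma\in[0,1]$) and observes that the indicator profile attains the minimum $\int_0^\tau xg\,dx$, your chained inequality $\tau A \leq \tau B \leq B' \leq A' \leq \tau A$ delivers the same bound more transparently and, importantly, actually closes the uniqueness step: the paper asserts that the feasibility of $\bar\gamma$ together with $\int xg\bar\gamma = \int_0^\tau xg\,dx$ forces $\bar\gamma = \mathbf{1}\{x\le\tau\}$, but does not spell out why, whereas your equality-case analysis $\int_\tau^\infty(x-\tau)g\bar\gamma\,dx = 0$ and $\int_0^\tau(\tau-x)g(1-\bar\gamma)\,dx = 0$ pins down $\bar\gamma$ explicitly (a.e.\ with respect to $g\,dx$). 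You also correctly flag a real gap: the paper defines $\bar\gamma(x) = \lim_\lambda \gamma^\lambda(x)$ in \eqref{new} without establishing that the limit exists, and your proposed repair---run Steps 1--3 on arbitrary subsequential limits and invoke the uniqueness just established---is the right fix. One small caveat on that repair: weak-$\ast$ compactness in $L^\infty(g\,dx)$ furnishes a weak-$\ast$ convergent subsequence, not an a.e.\ convergent one, but weak-$\ast$ convergence tested against the $L^1(dx)$ functions $g$ and $xg$ is exactly what Steps 1--3 and Theorem~\ref{thm:srpt}(a) consume, so the argument still goes through without upgrading the mode of convergence.
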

\begin{proof}
We first note that Lemma \ref{lm:th_loss} implies that
\[
\Ec_{\gamma}=\lim_{\lambda\rightarrow\infty} \frac{\Th_{O}^{\lambda}(\infty)}{\lambda}
\geq \lim_{\lambda\rightarrow\infty}\frac{\Th_{L,1}^{\lambda}(\infty)}{\lambda} = G(\tau)
\]
As $\Ec_{\gamma} \cdot \Sc_{\gamma} \leq \bar s$,
\[\Sc_{\gamma} \leq \E[S|S\leq \tau].\]

Second, consider the optimization problem
\begin{equation}\label{eq:max}
\begin{split}
\min_{\gamma}& \int_{0}^{\infty} xg(x)\gamma(x)dx\\
\mbox{s.t.} ~ & \int_{0}^{\infty}g(x)\gamma(x)dx \geq \int_{0}^{\tau}g(x)dx\\
&\gamma(x)\in [0,1] \mbox{ for all $x\in [0,\infty)$}
\end{split}
\end{equation}
The constraint $\int_{0}^{\infty}g(x)\gamma(x)dx \geq \int_{0}^{\tau}g(x)dx$ implies that
\[\int_{\tau}^{\infty}g(x)\gamma(x)dx\geq \int_{0}^{\tau}g(x)(1-\gamma(x))dx.\]
For the objective function, we note that
\[\begin{split}
\int_{0}^{\infty} xg(x)\gamma(x)dx &\geq \int_{0}^{\tau} xg(x)\gamma(x)dx + \tau\int_{\tau}^{\infty} g(x)\gamma(x)dx\\
&\geq \int_{0}^{\tau} xg(x)\gamma(x)dx + \tau \int_{0}^{\tau}g(x)(1-\gamma(x))dx \geq \int_{0}^{\tau} xg(x)dx.
\end{split}\]
When $\gamma(x)=1\{x\leq \tau\}$, $\int_{0}^{\infty} xg(x)\gamma(x)dx=\int_{0}^{\tau} xg(x)dx$.
Thus, the minimum of \eqref{eq:max} is equal to $\E[S1\{S\leq \tau\}]$ and the minimum is achieved when
$\gamma(x)=1\{x\leq T\}$.

Lastly, as $\int_{0}^{\infty}g(x)\bar \gamma(x)dx \geq G(\tau)$ and
$\int_{0}^{\infty}xg(x)\bar \gamma(x)dx=\E[S|S\leq\tau]$, $\bar \gamma(x)=1\{x\leq \tau\}$.
\end{proof}

Lemma \ref{lm:max} implies that
\[\lim_{\lambda\rightarrow\infty} \frac{\Th_{O,1}^{\lambda}(\infty)}{\lambda}=G(\tau) \mbox{ and }
\lim_{\lambda\rightarrow\infty} \frac{\Th_{O,2}^{\lambda}(\infty)}{\lambda}=0,\]
which further implies that
\[\lim_{\lambda \rightarrow \infty} \PP(\Serv_c^{\lambda}| S_c \leq \tau) = 1 \mbox{ and }
\lim_{\lambda \rightarrow \infty} \PP(\Serv_c^{\lambda}| S_c > \tau) = 0.\]
We have thus proved part $(a)$ in Theorem \ref{thm:srpt}.

\subsection{Other Performance Measures}
We now turn to proving parts $(b)-(d)$ in the theorem. 

\begin{proof}

For customers with service time less than or equal to $\tau$, by part $(a)$ in Theorem \ref{thm:srpt},
\[\PP(T_c>V_c^{\lambda}|S_c\leq \tau) = \PP(\Serv_c^{\lambda}|S_c\leq \tau) \rightarrow 1 \mbox{ as $\lambda\rightarrow \infty$}. \]
We note from the proof of part (a) that the convergence holds regardless of patience time distribution. Consider a patience time distribution with $f(0)>0$, e.g., exponential patience time.
Then, we have $(V_c^{\lambda}|S_c\leq \tau)\Rightarrow 0$ as $\lambda\rightarrow \infty$.
Because $0\leq W_c^{\lambda}\leq V_c^{\lambda}$, $(W_c^{\lambda}|S_c\leq \tau)\Rightarrow 0$.
Next, as $W_c^{\lambda}=\min\{V_c^{\lambda}, T_c\}$, $W_c^{\lambda}\leq T_c$, and as $\E[T_c]=1/\theta<\infty$, by dominated convergence theorem,
\[\E[W_c^{\lambda}|S_c\leq \tau]\rightarrow 0 \mbox{ as $\lambda\rightarrow \infty$.}\]

For customers with service time larger than $\tau$, by part $(a)$ in Theorem \ref{thm:srpt},
\[\PP(T_c \leq V_c^{\lambda}|S_c\leq \tau) = \PP(\Ab_c^{\lambda}|S_c\leq \tau) \rightarrow 1 \mbox{ as $\lambda\rightarrow \infty$}. \]
This implies that $(W_c^{\lambda}|S_c> \tau)\Rightarrow T_c$. 
Because $W_c^{\lambda}\leq T_c$ and $\E[T_c]<\infty$, by dominated convergence theorem,
\[\E[W_c^{\lambda}|S_c> \tau]\rightarrow \E[T_c]=1/\theta \mbox{ as $\lambda\rightarrow \infty$.}\]

Lastly, the convergence of $\E[W_c^{\lambda}]$ follows from the fact that $\E[W_c^{\lambda}]=\E[W_c^{\lambda}|S_c\leq \tau]\PP(S_c\leq \tau)+\E[W_c^{\lambda}|S_c> \tau]\PP(S_c>\tau)$,

\end{proof}
\section{Proof of Proposition \ref{lm:th}} \label{app:prop2}

\begin{proof}
Let $\gamma(x)$ denote the steady-state probability of getting served for a customer with service time requirement $x$.
We first note that for the $M/GI/s^\lambda+GI$ queue, any scheduling policy in stationarity must satisfy
\[
\left(\lambda \int_{0}^{\infty}g(x)\gamma(x)dx\right)\cdot \frac{\int_{0}^{\infty}xg(x)\gamma(x)dx}{\int_{0}^{\infty}g(x)\gamma(x)dx}
\leq s^{\lambda} = \lambda \int_{0}^{\tau}xg(x)dx
\]
Thus, the maximum throughput of the system can be upper bounded by the optimal value
of the following optimization problem.
\begin{equation}\label{eq:max2}
\begin{split}
\max_{\gamma}~ & \lambda \int_{0}^{\infty} g(x)\gamma(x)dx\\
\mbox{s.t.} ~ & \int_{0}^{\infty}xg(x)\gamma(x)dx \leq  \int_{0}^{\tau}xg(x)dx\\
&\gamma(x)\in [0,1] \mbox{ for all $x\in [0,\infty)$}
\end{split}
\end{equation}
The constraint $\int_{0}^{\infty}xg(x)\gamma(x)dx \leq \int_{0}^{\tau}xg(x)dx$ implies that
\[
\int_{0}^{\tau}xg(x)\gamma(x)dx + \tau\int_{\tau}^{\infty} g(x)\gamma(x)dx
\leq \int_{0}^{\infty}xg(x)\gamma(x)dx \leq \int_{0}^{\tau}xg(x)dx,
\]
which further implies that
\[\begin{split}
&\tau\int_{\tau}^{\infty} g(x)\gamma(x)dx \leq \int_{0}^{\tau}xg(x)(1-\gamma(x))dx
\leq \tau \int_{0}^{\tau}g(x)(1-\gamma(x))dx,\\
& \int_{\tau}^{\infty} g(x)\gamma(x)dx \leq \int_{0}^{\tau}g(x)(1-\gamma(x))dx.
\end{split}\]
For the objective function, we have
\[\begin{split}
\lambda \int_{0}^{\infty} g(x)\gamma(x)dx&=\lambda \left(\int_{0}^{\tau} g(x)\gamma(x)dx+\int_{\tau}^{\infty} g(x)\gamma(x)dx\right)\\
&\leq \lambda \left(\int_{0}^{\tau} g(x)\gamma(x)dx+\int_{0}^{\tau}g(x)(1-\gamma(x))dx\right)\\
&=\lambda \int_{0}^{\tau} g(x) dx=\lambda G(\tau).
\end{split}\]

In addition, for $\gamma^*(x)=1\{x\leq \tau\}$,
\[\lambda \int_{0}^{\infty}g(x)\gamma^*(x)dx =\lambda G(\tau) \mbox{ and }
\int_{0}^{\tau}xg(x)\gamma^*(x)dx =\int_{0}^{\tau}xg(x)dx.\]
This implies that $\gamma^*$ is an optimal solution to \eqref{eq:max2}
with the optimal objective value $\lambda G(\tau)$.
Thus, $\Th_M^{\lambda} \leq \lambda G(\tau)$.
Because $\lim_{\lambda\rightarrow\infty} \Th_O^{\lambda}/\lambda = G(\tau)$,
SRPT maximizes the throughput asymptotically.
\end{proof}

 \end{appendix}

\bibliographystyle{plain}
\bibliography{./references}

\end{document}